\newcommand{\R}{\mathbb{R}}
\def\vint_#1{\mathchoice%
          {\mathop{\kern 0.2em\vrule width 0.6em height 0.69678ex depth -0.58065ex
                  \kern -0.8em \intop}\nolimits_{\kern -0.4em#1}}%
          {\mathop{\kern 0.1em\vrule width 0.5em height 0.69678ex depth -0.60387ex
                  \kern -0.6em \intop}\nolimits_{#1}}%
          {\mathop{\kern 0.1em\vrule width 0.5em height 0.69678ex depth -0.60387ex
                  \kern -0.6em \intop}\nolimits_{#1}}%
          {\mathop{\kern 0.1em\vrule width 0.5em height 0.69678ex depth -0.60387ex
                  \kern -0.6em \intop}\nolimits_{#1}}}
\newcommand{\art}[6]{{\sc #1, \rm #2, \it #3 \bf #4 \rm (#5), \mbox{#6}.}}
\newcommand{\book}[3]{{\sc #1, \it #2, \rm #3.}}
\newcommand{\AND}{{\rm and }}
\newcommand{\p}{{$p\mspace{1mu}$}}
\newcommand{\loc}{_{\rm loc}}
\newcommand{\eps}{\varepsilon}
\theoremstyle{plain}
\newtheorem{theorem}[equation]{Theorem}
\newtheorem{lemma}[equation]{Lemma}
\newtheorem{proposition}[equation]{Proposition}
\numberwithin{equation}{section}
\theoremstyle{definition}
\theoremstyle{remark}
\newtheorem{remark}[equation]{Remark}
\title{On the problem of unique continuation for the
  \p-Laplace equation}
\author{Seppo Granlund} \address[S.G.]{University of Helsinki,
  Department of Mathematics and Statistics, P.O. Box 68, FI-00014
  University of Helsinki, Finland} \email{seppo.granlund@pp.inet.fi}
\author{Niko Marola} \address[N.M.]{University of Helsinki, Department
  of Mathematics and Statistics, P.O. Box 68, FI-00014 University of
  Helsinki, Finland} \email{niko.marola@helsinki.fi}
\date{}
\begin{document}

\keywords{Frequency function; \p-harmonic function.}

\subjclass[2010]{Primary: 35J92; Secondary: 35B60, 35J70.}

\begin{abstract}
  We study if two different solutions of the \p-Laplace equation
  $$\nabla\cdot(|\nabla u|^{p-2}\nabla u)=0,$$ where $1<p<\infty$, can
  coincide in an open subset of their common domain of definition. We
  obtain some partial results on this interesting problem.
\end{abstract}

\maketitle

\section{Introduction}

We consider the \p-Laplace equation in an open connected set
$G\subset\R^n$, $n\geq 2$,
\begin{equation} \label{eq:pLap} \Delta_pu := \nabla\cdot(|\nabla
  u|^{p-2}\nabla u)=0, \qquad 1<p<\infty.
\end{equation}
For $p=2$ we recover the Laplace equation $\Delta u=0$. We study the
question whether two different solutions to \eqref{eq:pLap} can
coincide in an open subset of their common domain of definition $G$.
%
%
%

This problem of unique continuation is still, to the best of our
knowledge, an open problem, except for the planar case $n=2$. The
planar case has been solved by Alessandrini~\cite{Aless87}, and by a
different approach by Manfredi~\cite{Manfredi} and Bojarski and
Iwaniec in \cite{BoIw}, as they have observed that the complex
gradient of a solution to \eqref{eq:pLap} is quasiregular. 

In addition, there are some recent partial results of the unique
continuation property for the game \p-Laplace equation on trees. We
refer to \cite{Rossi}.

We refrain from giving a detailed bibliographical account on the
literature on unique continuation results for linear elliptic
equations in divergence form. We refer to the papers
\cite{GarLinIndiana} and \cite{GarLinCPAM} by Garofalo and Lin, and to
a more recent paper by Alessandrini~\cite{Aless}, and suggest the
reader to consult also their bibliographies for more detailed
information on the subject.

In the present paper, we deal with the problem of unique continuation
by studying a certain generalization of Almgren's frequency function
for the \p-Laplacian. Our results, along with the notation and the
preliminary results, are stated in
\textsection~\ref{sect:Results}. The proofs can be found in
\textsection~\ref{sect:ProofweakDprop}--\ref{sect:Affine}.

\section{Results}
\label{sect:Results}

Let $G$ be an open connected subset of $\R^n$. We consider the
\p-Laplace equation \eqref{eq:pLap} in the weak form
\begin{equation} \label{eq:pLaplace}
\int_G|\nabla u|^{p-2}\nabla u\cdot\nabla\eta\, dx = 0,
\end{equation}
where $\eta\in C_0^\infty(G)$ and $1<p<\infty$. We refer the reader
to, e.g., Heinonen et al.~\cite{HKM} and Lindqvist~\cite{Lindqvist}
for a detailed study of the \p-Laplace equation and various properties
of its solutions. We mention in passing, however, that the weak
solutions of \eqref{eq:pLap} are $C\loc^{1,\alpha}(G)$, where $\alpha$
depends on $n$ and $p$. We refer to DiBenedetto~\cite{DiB},
Lewis~\cite{LewisIndiana}, and Tolksdorf~\cite{Tolksdorf} for this
regularity result. \emph{Hence, without loss of generality, we may redefine
$u$ so that $u\in W\loc^{1,p}(G)\cap C^1(G)$}.

Let us define the \emph{frequency function}
\begin{equation} \label{FF} F_p(r) = \frac{r\int_{B(z,r)}|\nabla u|^p\,
    dx}{\int_{\partial B(z,r)}|u|^p\, dS},
\end{equation}
where $\overline{B}(z,r)\subset G$; we denote
\[
D(r)=\int_{B(z,r)}|\nabla u|^p\, dx \quad \textrm{and} \quad
I(r)=\int_{\partial B(z,r)}|u|^p\, dS.
\]
Observe that $F_p(r)$ is not defined for such radii $r$ for which
$I(r)=0$. We remark that $F_p(r)$ is a generalization of the well
known Almgren frequency function
\begin{equation} \label{AlmFF} F_2(r) = \frac{r\int_{B(z,r)}|\nabla
    u|^2\, dx}{\int_{\partial B(z,r)}|u|^2\, dS}
\end{equation}
for harmonic functions in $\R^n$. To the best of our knowledge,
$F_p(r)$, $p\neq 2$, has not been previously studied in the
literature. It might be interesting to study other generalizations,
for instance, the case in which $r$ is replaced with $r^{p-1}$ in
\eqref{FF}. We have, however, omitted such considerations here.

The main results of the present paper are the following theorems.

\begin{theorem} \label{thm:weakDprop} Suppose $u\in C^1(G)$. Assume
  further that there exist two concentric balls $B_{r_b}\subset
  \overline{B}_{R_b}\subset G$ such that the frequency function
  $F_p(r)$ is defined, i.e., $I(r)>0$ for every $r\in(r_b,R_b]$, and
  moreover, $\|F_p\|_{L^\infty((r_b,R_b])}<\infty$. Then there exists
  some $r^\star\in(r_b,R_b]$ such that
\begin{equation} \label{eq:weakD1}
\int_{\partial B_{r_1}}|u|^p\, dS
  \leq 4\int_{\partial B_{r_2}}|u|^p\,
  dS,
\end{equation}
for every $r_1,\,r_2\in(r_b,r^\star]$. In particular, the following
weak doubling property is valid
\begin{equation} \label{eq:weakD2}
\int_{\partial B_{r^\star}}|u|^p\, dS
  \leq 4\int_{\partial B_r}|u|^p\,
  dS,
\end{equation}
for every $r\in(r_b,r^\star]$.
\end{theorem}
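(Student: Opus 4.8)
The plan is to reduce the whole statement to a single first-order differential inequality for the normalized quantity
\[
J(r):=\frac{I(r)}{r^{n-1}}=\int_{S^{n-1}}|u(r\omega)|^p\,d\sigma(\omega),
\]
and then to integrate it. Since the only hypotheses are $u\in C^1(G)$ and the bound on $F_p$, I would not use the equation at all; all information enters through the finiteness of $M:=\|F_p\|_{L^\infty((r_b,R_b])}$, i.e.\ through $D(r)\le (M/r)I(r)$. First I would differentiate $J$: because $u\in C^1$ and $p>1$, the map $r\mapsto|u(r\omega)|^p$ is $C^1$ with derivative $p|u|^{p-2}u\,(\nabla u\cdot\omega)$, so differentiation under the integral sign gives $J'(r)=p\int_{S^{n-1}}|u|^{p-2}u\,(\nabla u\cdot\omega)\,d\sigma$. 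Estimating by H\"older on $S^{n-1}$ with exponents $p/(p-1)$ and $p$, and using $\int_{S^{n-1}}|\nabla u(r\omega)|^p\,d\sigma=r^{-(n-1)}D'(r)$ with $D'(r)=\int_{\partial B_r}|\nabla u|^p\,dS$, yields $|J'(r)|\le pJ(r)^{(p-1)/p}(D'(r)/r^{n-1})^{1/p}$. Dividing by $pJ^{(p-1)/p}$ collapses this to the clean inequality
\[
\Bigl|\tfrac{d}{dr}J(r)^{1/p}\Bigr|\le\bigl(D'(r)/r^{n-1}\bigr)^{1/p},\qquad r\in(r_b,R_b].
\]

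The second step is to integrate and insert the frequency bound. For $r_b<r_1\le r_2\le R_b$, integrating the displayed inequality and applying H\"older in $s$ (exponents $p$ and $p/(p-1)$) gives $|J(r_2)^{1/p}-J(r_1)^{1/p}|\le (D(r_2)-D(r_1))^{1/p}\bigl(\int_{r_1}^{r_2}s^{-(n-1)/(p-1)}\,ds\bigr)^{(p-1)/p}$. Here I bound $D(r_2)-D(r_1)\le D(r_2)\le M r_2^{\,n-2}J(r_2)$ by the frequency hypothesis, and $\int_{r_1}^{r_2}s^{-(n-1)/(p-1)}\,ds\le (r_2-r_1)\,r_1^{-(n-1)/(p-1)}$ since the integrand decreases. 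This produces the self-referential estimate $|J(r_2)^{1/p}-J(r_1)^{1/p}|\le C(r_1,r_2)\,J(r_2)^{1/p}$ with $C(r_1,r_2)=M^{1/p}r_2^{(n-2)/p}r_1^{-(n-1)/p}(r_2-r_1)^{(p-1)/p}$, and on any interval $(r_b,r^\star]$ one has $C\le \widetilde C(r^\star):=M^{1/p}(r^\star)^{(n-2)/p}r_b^{-(n-1)/p}(r^\star-r_b)^{(p-1)/p}$.

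The third step is to choose $r^\star$ by continuity. Once $\widetilde C(r^\star)<1$, the estimate gives $(1-\widetilde C)J(r_2)^{1/p}\le J(r_1)^{1/p}\le (1+\widetilde C)J(r_2)^{1/p}$ for all $r_1\le r_2$ in $(r_b,r^\star]$, whence $\max J/\min J\le(1-\widetilde C)^{-p}$ there; fixing one radius and letting the other tend to $r_b$ shows moreover $\inf_{(r_b,r^\star]}J>0$, so no degeneracy at $r_b$ can occur. Consequently, for any $r_1,r_2\in(r_b,r^\star]$,
\[
\frac{I(r_1)}{I(r_2)}=\Bigl(\frac{r_1}{r_2}\Bigr)^{n-1}\frac{J(r_1)}{J(r_2)}\le\Bigl(\frac{r^\star}{r_b}\Bigr)^{n-1}\bigl(1-\widetilde C(r^\star)\bigr)^{-p}=:\Phi(r^\star).
\]
Since $\widetilde C(\rho)\to0$ and $(\rho/r_b)^{n-1}\to1$ as $\rho\to r_b^+$, we have $\Phi(\rho)\to1$, so there is $r^\star\in(r_b,R_b]$ with $\Phi(r^\star)\le4$; this is exactly \eqref{eq:weakD1}, and taking $r_1=r^\star$ gives \eqref{eq:weakD2}.

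I expect the conceptual obstacle to be the absence of an exact differential identity for $I'(r)$: unlike the classical case $p=2$, the boundary term $\int_{\partial B_r}|u|^{p-2}u\,\partial_\nu u\,dS$ produced by $J'$ does not coincide with $D(r)=\int_{\partial B_r}u\,|\nabla u|^{p-2}\partial_\nu u\,dS$ when $p\ne2$. One is therefore forced into the H\"older estimate above, which introduces $D'(r)$, a quantity controlled by $F_p$ only in the integrated form $D(r)\le (M/r)I(r)$ and never pointwise. This is precisely why the conclusion is merely a weak doubling on a subinterval: the coefficient $\widetilde C(r^\star)$ carries the factor $(r^\star-r_b)^{(p-1)/p}$, which can be forced small only by shrinking the interval, and the resulting self-referential inequality must then be closed by the continuity selection of $r^\star$.
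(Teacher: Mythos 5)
Your proof is correct, but it follows a genuinely different route from the paper's. The paper starts from its differentiation formula for $I(r)$ (inequality \eqref{I'2}), integrates over $(r,s)$, and splits the boundary term by Young's inequality with a parameter tuned to the frequency bound ($\eps_0 = 1/(4pM)$); the resulting estimate for $(I(s)-I(r))/I(s)$ is only valid when $I(t)\le I(s)$ for all intermediate radii $t$, so the paper must invoke continuity of $I$ and take $r^\star$ to be the \emph{maximizer} of $I$ on $[r_b,r_0]$, forcing each of three error terms below $1/4$. You instead normalize to $J=I/r^{n-1}$ and pass to $J^{1/p}$, which yields a differential inequality $|(J^{1/p})'|\le (D'/r^{n-1})^{1/p}$ whose right-hand side contains no $J$ at all; after radial H\"older and the frequency bound applied only at the larger endpoint, you obtain a self-referential inequality valid for \emph{arbitrary} pairs $r_1\le r_2$, with a constant that shrinks with the interval. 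This buys two things the paper's argument lacks: no maximizer selection (hence no monotonicity requirement on intermediate radii), and two-sided comparability of $I$ on $(r_b,r^\star]$ with constant tending to $1$ as $r^\star\to r_b$, of which the stated factor $4$ is a special case. The paper's route, in exchange, stays entirely at the level of $I$ with elementary manipulations, and its maximizer device is precisely what it reuses in the proof of Theorem~\ref{thm:uniquecont}. Two small points you should make explicit in a final write-up: justify differentiation under the integral sign for $J$ (routine, since $t\mapsto|t|^p$ is $C^1$ for $p>1$ and $u\in C^1$ in a neighborhood of $\overline{B}_{R_b}$), and note that $F_p$ is continuous on $(r_b,R_b]$, so the $L^\infty$ bound $M$ may be used pointwise at $r_2$.
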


In the following we formulate a result which says that the local
boundedness of the frequency function implies certain vanishing
properties of the solution. In this respect the situation is similar
to the linear case $p=2$, and we thus generalize this phenomenon to
every $1<p<\infty$.

\begin{theorem} \label{thm:uniquecont} Suppose $u$ is a solution to
  the \p-Laplace equation in $G$. Consider arbitrary concentric balls
  $B_{r_b}\subset \overline{B}_{R_b}\subset G$. Assume the following:
  whenever $I(r)>0$ for every $r\in (r_b,R_b]$, then
  $\|F_p\|_{L^\infty((r_b,R_b])}<\infty$. Then if $u$ vanishes on some
  open ball in $G$, $u$ is identically zero in $G$.
\end{theorem}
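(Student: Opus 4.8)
The plan is to reduce the global statement to a one-parameter (radial) assertion centered at a single point and then to propagate the vanishing of $u$ outward by means of the weak doubling property of Theorem~\ref{thm:weakDprop}. Fix a center $x_0$; since $u\in C^1(G)$, the quantity $I(r)=\int_{\partial B(x_0,r)}|u|^p\,dS$ is continuous in $r$. The heart of the matter is the following \emph{ball-growing} claim: if $u\equiv 0$ on $B(x_0,\rho)$ for some $\rho>0$ and $\overline{B}(x_0,R)\subset G$, then $u\equiv 0$ on $B(x_0,R)$.

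I would prove the claim by contradiction. Put $\rho^\star=\sup\{r\le R:\ u\equiv 0\ \text{on}\ B(x_0,r)\}$ and assume $\rho^\star<R$. As this set of radii is down-closed, $u\equiv 0$ on $B(x_0,\rho^\star)$ and hence $I\equiv 0$ on $(0,\rho^\star]$, while $u$ is not identically zero on any larger concentric ball; by continuity of $u$ there is thus $r_1\in(\rho^\star,R)$ with $I(r_1)>0$. Let $a=\inf\{r\in(\rho^\star,r_1]:\ I>0\ \text{on}\ [r,r_1]\}$. A short argument using the continuity of $I$ gives $a<r_1$, $I>0$ on $(a,r_1]$, and $I(a)=0$. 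Since $\overline{B}(x_0,r_1)\subset B(x_0,R)\subset G$ and $I>0$ throughout $(a,r_1]$, the hypothesis of the theorem yields $\|F_p\|_{L^\infty((a,r_1])}<\infty$, so Theorem~\ref{thm:weakDprop}, applied with $r_b=a$ and $R_b=r_1$, furnishes some $r^\star\in(a,r_1]$ with $I(r^\star)\le 4\,I(r)$ for every $r\in(a,r^\star]$.

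The contradiction now rests on the single delicate point of the whole proof, namely that $F_p$ is undefined across the interface $\{I=0\}$, which is handled by a limiting argument. Letting $r\to a^+$ and using $I(a)=0$ forces $I(r^\star)\le 0$, whence $I(r^\star)=0$; but $r^\star\in(a,r_1]$, where $I>0$, a contradiction. Therefore $\rho^\star=R$, so $u\equiv 0$ on $B(x_0,R)$, which proves the claim; letting $R\uparrow\dist(x_0,\partial G)$ then shows that $u$ vanishes on the entire ball $B(x_0,\dist(x_0,\partial G))$.

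It remains to globalize by connectedness. Let $V$ be the interior of the zero set of $u$; it is open and, by assumption, nonempty. To see that $V$ is relatively closed in $G$, take $x_0\in\overline{V}\cap G$ and a sequence $y_k\in V$ with $y_k\to x_0$. Each $y_k$ has a neighborhood on which $u\equiv 0$, so by the claim $u\equiv 0$ on $B(y_k,\dist(y_k,\partial G))$; since $\dist(y_k,\partial G)\to\dist(x_0,\partial G)>0$ while $y_k\to x_0$, these balls eventually contain a fixed neighborhood of $x_0$, so $x_0\in V$. Thus $V$ is simultaneously open, closed, and nonempty in the connected set $G$, giving $V=G$, i.e.\ $u\equiv 0$ in $G$. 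I expect the limiting step at the radius $a$, where the frequency function ceases to be defined, to be the main obstacle; the remaining ingredients are routine consequences of the continuity of $I$ and the weak doubling estimate.
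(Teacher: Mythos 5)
Your proof is correct, and its core mechanism is the same as the paper's: on an interval $(a,r_1]$ where $I>0$ but $I(a)=0$, the hypothesis makes $F_p$ bounded, Theorem~\ref{thm:weakDprop} yields $I(r^\star)\le 4I(r)$ for all $r\in(a,r^\star]$, and letting $r\to a^+$ forces $I(r^\star)=0$, contradicting $I>0$ on $(a,r_1]$. You differ from the paper in two ways, both worth noting. First, the paper locates the interface radius as $s=\sup\{t>0: u|_{\partial B_t}\equiv 0\}$ and must invoke Lemma~\ref{lemma:energyformula} --- that is, the fact that $u$ solves the equation --- to guarantee that vanishing of $u$ on a sphere forces vanishing throughout the enclosed ball, which is what rules out $s\ge r_2$ and gives $I>0$ on all of $(s,r_2]$. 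Your definition $a=\inf\{r\in(\rho^\star,r_1]:\ I>0 \text{ on } [r,r_1]\}$ extracts the same configuration ($I>0$ on $(a,r_1]$, $I(a)=0$) by continuity of $I$ alone; consequently your argument never uses the equation except through the assumed boundedness of $F_p$ and Theorem~\ref{thm:weakDprop} (which is stated for arbitrary $C^1$ functions), so it in fact proves the unique continuation principle for every $C^1$ function satisfying the frequency hypothesis, not only for \p-harmonic ones. Second, the paper's proof establishes only the concentric-ball statement (formulation (ii') of the introduction) and leaves the passage to all of $G$ implicit; your open--closed connectedness argument, together with the limiting step $R\uparrow\dist(x_0,\partial G)$, supplies this globalization explicitly. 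What the paper's route buys in exchange is a structural fact of independent interest for solutions --- via the energy formula, vanishing on a single sphere already forces vanishing of the solution inside that sphere --- which your softer, purely topological localization does not yield.
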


It remains an open problem whether the frequency function $F_p(r)$ is
locally bounded for the solutions to the \p-Laplace equation. Local
boundedness combined with the method of the present paper would solve
the unique continuation problem for equation \eqref{eq:pLap}. 

In \textsection~\ref{sect:Affine} we study the question whether a
solution can coincide with an affine function without being
identically affine in the whole common domain of definition. Clearly
an affine function is a solution to the \p-Laplace equation. In
Proposition~\ref{prop:uniqueLinear}--\ref{prop:inforder} we
provide an answer to this question. This is a nonlinear generalization
of the corresponding phenomenon known for harmonic functions. Perhaps
surprisingly, this feature is rather easy to achieve while the
classical unique continuation principle for the \p-Laplace equation
still remains an open problem.

In \textsection~\ref{sect:Remarks} we discuss some observations which
might be of interest for further studies.

\subsection*{Preliminaries}

Throughout the paper $G$ is an open connected subset of $\R^n$, $n\geq
2$, and $1<p<\infty$. We use the notation $B_r = B(z,r)$ for
concentric open balls of radii $r$ centered at $z\in G$. Unless
otherwise stated, the letter $C$ denotes various positive and finite
constants whose exact values are unimportant and may vary from line to
line. Moreover, $dx=dx_1\ldots dx_n$ denotes the Lebesgue volume
element in $\R^n$, whereas $dS$ denotes the surface element. We denote
by $|E|$ the $n$-dimensional Lebesgue measure of a measurable set
$E\subseteq\R^n$. The characteristic function of $E$ is denoted by
$\chi_E$. Along $\partial B_r$ is defined the outward pointing unit
normal vector field at $x\in \partial B_r$ and is denoted by
$\nu(x)=(\nu_1,\ldots,\nu_n)(x)$. We will also write $u_\nu = \nabla
u\cdot \nu$ or $\partial u/\partial \nu$ for the normal
derivative of $u$. Define sets $P$ and $N$ as follows
\begin{equation*}
P = \{\omega\in \partial B_1: u(r\omega)>0\} \quad \textrm{and} \quad
N = \{\omega\in \partial B_1: u(r\omega)\leq 0\}
\end{equation*}
for all $r>0$.

We obtain the following formula for the derivative of $I(r)$ in
\eqref{FF}. Consult similar calculations in
Garofalo--Lin~\cite{GarLinIndiana} for the case $p=2$.
\begin{align} \label{I'} I'(r) & = \frac{\partial}{\partial
    r}\left(r^{n-1}\int_{\partial B_1}|u(r\omega)|^p\, d\omega\right) = \frac{n-1}{r}r^{n-1}\int_{\partial B_1}|u(r\omega)|^p\, d\omega \nonumber \\
  & \qquad + pr^{n-1}\left(\int_{\partial
      B_1}u(r\omega)^{p-1}\chi_Pu_r(r\omega)\, d\omega\right. \nonumber \\
  & \qquad \qquad \left. - \int_{\partial
      B_1}(-u(r\omega))^{p-1}\chi_Nu_r(r\omega)\, d\omega\right) \nonumber \\
  & \leq \frac{n-1}{r}r^{n-1}\int_{\partial B_1}|u(r\omega)|^p\, d\omega
  + pr^{n-1}\left(\int_{\partial B_1\cap
      P}|u(r\omega)|^{p-1}|u_r(r\omega)|\,d\omega\right. \nonumber \\
  & \qquad \qquad \left. + \int_{\partial B_1\cap
      N}|u(r\omega)|^{p-1}|u_r(r\omega)|\, d\omega\right).
\end{align}
Formula \eqref{I'} gives us the inequality
\begin{equation} \label{I'2}
I'(r) \leq \frac{n-1}{r}I(r) + p\int_{\partial B_r}|u|^{p-1}|u_\nu|\, dS.
\end{equation}

We shall also need the following formula for the solutions to the
\p-Laplace equation. It is probably earlier known in the literature,
but we provide it here due to the lack of references.

\begin{lemma} \label{lemma:energyformula} Suppose $u$ is a solution to
  the \p-Laplace equation in $G$. Then the following identity holds
  for the \p-Dirichlet integral
  \begin{equation} \label{eq:energyformula} \int_{B_r}|\nabla u|^p\,
    dx = \int_{\partial B_r}|\nabla u|^{p-2}uu_\nu\, dS
\end{equation}
for every $\overline{B}_r\subset G$.
\end{lemma}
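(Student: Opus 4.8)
The plan is to read off the identity directly from the weak formulation \eqref{eq:pLaplace} by testing against $\eta = u\varphi_\varepsilon$, where $\varphi_\varepsilon$ is a radial cutoff approximating the characteristic function $\chi_{B_r}$. Concretely, I would fix the center $x$ of the ball, choose $\psi_\varepsilon\colon[0,\infty)\to[0,1]$ smooth with $\psi_\varepsilon\equiv 1$ on $[0,r-\varepsilon]$, $\psi_\varepsilon\equiv 0$ on $[r,\infty)$, and $\supp\psi_\varepsilon'\subset(r-\varepsilon,r)$, and then set $\varphi_\varepsilon(y)=\psi_\varepsilon(|y-x|)$. The idea is that $\nabla\varphi_\varepsilon$ concentrates on the thin annulus and acts, in the limit, as a surface measure on $\partial B_r$.

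Before substituting, I would justify that $\eta=u\varphi_\varepsilon$ is admissible. Although \eqref{eq:pLaplace} is stated for $\eta\in C_0^\infty(G)$, since $u\in W\loc^{1,p}(G)\cap C^1(G)$ the product $u\varphi_\varepsilon$ lies in $W^{1,p}(G)$ and is compactly supported in $G$. The functional $\eta\mapsto\int_G|\nabla u|^{p-2}\nabla u\cdot\nabla\eta\,dx$ is continuous on such functions, since by H\"older's inequality it is controlled by $\||\nabla u|^{p-1}\|_{L^{p'}\loc}\,\|\nabla\eta\|_{L^p}$ with $p'=p/(p-1)$, and $|\nabla u|^{p-1}\in L^{p'}\loc(G)$ because $|\nabla u|^p\in L^1\loc(G)$. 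Hence the density of $C_0^\infty$ in the compactly supported part of $W^{1,p}$ extends \eqref{eq:pLaplace} to $\eta=u\varphi_\varepsilon$. I would also note that $|\nabla u|^{p-2}\nabla u$ is continuous, its modulus being $|\nabla u|^{p-1}$, so that no difficulty arises at points where $\nabla u=0$, even when $p<2$.

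Now $\nabla\eta=\varphi_\varepsilon\nabla u+u\nabla\varphi_\varepsilon$, and substitution into \eqref{eq:pLaplace}, together with $|\nabla u|^{p-2}\nabla u\cdot\nabla u=|\nabla u|^p$, gives
\begin{equation*}
0=\int_G\varphi_\varepsilon|\nabla u|^p\,dx+\int_G u|\nabla u|^{p-2}\nabla u\cdot\nabla\varphi_\varepsilon\,dx.
\end{equation*}
As $\varepsilon\to0$ the first integral converges to $\int_{B_r}|\nabla u|^p\,dx$ by dominated convergence. For the second I would write $\nabla\varphi_\varepsilon(y)=\psi_\varepsilon'(|y-x|)\nu(y)$, with $\nu$ the radial unit vector, so that $\nabla u\cdot\nabla\varphi_\varepsilon=\psi_\varepsilon'(\rho)\,u_\nu$ on $\partial B_\rho$, and pass to polar coordinates $dx=dS\,d\rho$ to obtain
\begin{equation*}
\int_{r-\varepsilon}^r\psi_\varepsilon'(\rho)\,g(\rho)\,d\rho,\qquad g(\rho)=\int_{\partial B_\rho}u|\nabla u|^{p-2}u_\nu\,dS.
\end{equation*}
Since $\int_{r-\varepsilon}^r\psi_\varepsilon'\,d\rho=\psi_\varepsilon(r)-\psi_\varepsilon(r-\varepsilon)=-1$ and $g$ is continuous, this term converges to $-g(r)$. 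Combining the two limits yields exactly \eqref{eq:energyformula}.

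The computation is elementary; the only points demanding care are the admissibility of the non-smooth test function $u\varphi_\varepsilon$, handled by the density argument above, and the identification of the limit of the cutoff term. The main obstacle there is to confirm that the spherical integral $g(\rho)$ is genuinely continuous up to $\rho=r$, so that the approximate identity $\psi_\varepsilon'$ selects precisely the boundary value $g(r)$. This follows from the continuity of $\nabla u$ and the boundedness of $u$ on the compact annulus $\overline{B}_r\setminus B_{r/2}\subset G$, which make the integrand $u|\nabla u|^{p-2}u_\nu$ bounded and continuous there.
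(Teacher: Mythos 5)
Your proof is correct, but it follows a genuinely different route from the paper. The paper treats \eqref{eq:energyformula} as a Gauss--Green identity that would require $u\in C^2$, and therefore invokes the full approximation machinery of Lewis: smooth minimizers $\hat u_\varepsilon$ of the regularized functional $\int_D(|\nabla\psi|^2+\varepsilon)^{p/2}\,dx$, the Gauss--Green theorem applied to the regularized vector field, uniform gradient and H\"older estimates independent of $\varepsilon$, Reshetnyak's lower semicontinuity to identify the weak limit with $u$, and Ascoli--Arzel\`a to pass to the limit in the approximate identity. You instead work directly with the weak formulation \eqref{eq:pLaplace}, testing with $\eta=u\varphi_\varepsilon$ where $\varphi_\varepsilon$ approximates $\chi_{B_r}$; the only ingredients are the standard density fact that compactly supported $W^{1,p}$ test functions are admissible, the $C^1$ regularity of $u$ (which the paper has already granted by quoting DiBenedetto, Lewis, and Tolksdorf), the observation that $|\nabla u|^{p-2}\nabla u$ is continuous with modulus $|\nabla u|^{p-1}$ even for $1<p<2$, and the continuity of $\rho\mapsto\int_{\partial B_\rho}u|\nabla u|^{p-2}u_\nu\,dS$ up to $\rho=r$. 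This is shorter and more elementary: it exploits the fact that the identity to be proved is precisely the energy identity, so that $u$ itself (cut off) is the natural test function, and it avoids the entire regularization apparatus. The paper's heavier route has the advantage of producing smooth approximate solutions of a uniformly elliptic equation, a tool reusable for identities that genuinely involve second derivatives, but for this particular lemma your argument suffices. One small point to make explicit: choose $\psi_\varepsilon$ monotone decreasing on $(r-\varepsilon,r)$, so that $\int_{r-\varepsilon}^r|\psi_\varepsilon'|\,d\rho=1$; then the estimate
\begin{equation*}
\Bigl|\int_{r-\varepsilon}^r\psi_\varepsilon'(\rho)g(\rho)\,d\rho+g(r)\Bigr|
\leq\sup_{\rho\in[r-\varepsilon,r]}|g(\rho)-g(r)|
\end{equation*}
closes the approximate-identity step rigorously; without some control on $\int|\psi_\varepsilon'|$ the convergence to $-g(r)$ would not follow from $\int\psi_\varepsilon'=-1$ alone.
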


\begin{proof}
  As in the classical case $p=2$ the proof is based on the
  Gauss--Green theorem. In the general case $1<p<\infty$, however, $u$
  is not necessarily in $C^2(G)$. Hence we have to use an
  approximation argument; we will use the approximation method
  presented by Lewis in \cite{LewisIndiana}. Consider a ball $B_r$ and
  a bounded open set $D$ such that $\overline{B}_r\subset
  \overline{D}\subset G$. Let $0<\eps<1$. Following
  \cite{LewisIndiana} we construct a sequence of functions $\hat
  u_\eps\in W^{1,p}(D)\cap C^\infty(D)$ such that they minimize the
  variational integral
\[
\mathcal{I}_\eps(\psi) = \int_D\left(|\nabla\psi|^2+\eps\right)^{p/2}\, dx,
\]
over all admissible functions in $\mathcal{F}_u(D) = \{v\in W^{1,p}(D):
v-u \in W_0^{1,p}(D)\}$. It is well known that the minimizing
function $\hat u_\eps$ is unique. The function $\hat u_\eps$ is a
solution to uniformly elliptic equation in the weak form
\[
\int_D\left(|\nabla\hat u_\eps|^2+\eps\right)^{p/2-1}\nabla\hat
u_\eps\cdot\nabla\eta\, dx = 0
\]
for all $\eta\in C_0^\infty(D)$, which is equivalent to
\[
\nabla\cdot\left(\left(|\nabla\hat u_\eps|^2 + \eps\right)^{p/2-1}\nabla\hat
u_\eps\right)= 0
\]
by the Gauss--Green theorem and the fact,
cf. Lewis~\cite{LewisIndiana}, that $\hat u_\eps\in C^\infty(D)$. Then
we consider the vector field
\[
U_{\eps} = \hat u_\eps(|\nabla\hat u_\eps|^2+\eps)^{(p-1)/2}\nabla\hat
u_\eps,
\]
It is clear that $U_\eps\in C^1(D)$. We may apply the Gauss--Green
theorem to $U_\eps$ and obtain the following formula
\begin{align} \label{eq:approx}
  \int_{B_r}|\nabla\hat u_\eps|^2 & \left(|\nabla\hat u_\eps|^2
    +\eps\right)^{p/2-1}\, dx \nonumber \\
  & \qquad = \int_{\partial B_r}\left(|\nabla\hat u_\eps|^2
    +\eps\right)^{p/2-1}\hat u_\eps\frac{\partial\hat
    u_\eps}{\partial\nu}\, dS.
\end{align}
Above we used the fact that
\[
\int_{B_r}\hat u_\eps\nabla\cdot\left(\left(|\nabla\hat u_\eps|^2
    +\eps\right)^{p/2-1}\nabla\hat u_\eps\right)\, dx = 0.
\]
Due to Lewis~\cite[Theorem 1]{LewisIndiana}, there exists $\alpha>0$,
depending only on $p$ and $n$, and positive $A<\infty$, depending only
on $p$, $n$, and $D$, such that
\begin{equation} \label{eq:gradest}
\max_{x\in \overline{D}}|\nabla\hat u_\eps(x)| \leq A,
\end{equation}
and for each $x,y\in D$
\begin{equation} \label{eq:Holderest}
|\nabla\hat u_\eps(x)-\nabla\hat u_\eps(y)| \leq A|x-y|^\alpha.
\end{equation}
In particular, constants $A$ and $\alpha$ are independent of
$\eps$. From \eqref{eq:gradest}, \eqref{eq:Holderest}, the Poincar\'e
inequality (see, e.g., \cite[Chapter 1]{HKM} or \cite[Chapter
7]{GilTru}) and from the weak compactness of $W^{1,p}$, it follows
that a subsequence of $\{\hat u_\eps\}$ converges weakly to a function
$v$ in $W^{1,p}$, and $v\in \mathcal{F}_u(D)$. To prove that $v$
minimizes the \p-Dirichlet integral $\mathcal{I}=\int_D|\nabla
\psi|^p\, dx$ over $\mathcal{F}_u(D)$, suppose
$\psi\in\mathcal{F}_u(D)$ is arbitrary. Since $\hat u_\eps$ is the
minimizing function we obtain
\[
\mathcal{I}(\psi) = \lim_{\eps\to 0}\mathcal{I}_\eps(\psi)\geq
\liminf_{\eps\to 0}\mathcal{I}_\eps(\hat u_\eps) \geq \mathcal{I}(v),
\]
where in the last inequality we used Reshetnyak's lower semicontinuity
theorem, see \cite[Theorem 1.1]{Reshetnyak}. Hence $v$ minimizes the
\p-Dirichlet integral in $\mathcal{F}_u(D)$, and so $v=u$.

To apply the Ascoli--Arzela principle we need to verify that the
sequences $\{\hat u_\eps\}$ and $\{\nabla\hat u_\eps\}$ are uniformly
bounded and equicontinuous. These two properties for the latter
sequence follow from \eqref{eq:gradest} and \eqref{eq:Holderest}. In
addition, equicontinuity of $\{\hat u_\eps\}$ follows from
\eqref{eq:gradest} and Morrey's lemma, see, e.g.,
\cite[\textsection~2.3, Lemma 4.1]{LaUr} or \cite[Chapter 12]{GilTru}. 
That the sequence is
uniformly bounded follows from the weak maximum principle of the
\p-Laplace equation.

The Ascoli--Arzela theorem implies that there exists a subsequence of
$\{\hat u_\eps\}$ and of $\{\nabla\hat u_\eps\}$, both still denoted
by $\{\hat u_\eps\}$ and $\{\nabla\hat u_\eps\}$, such that $\hat
u_\eps$ and $\nabla \hat u_\eps$ converge uniformly to $u$ and $\nabla
u$ in $\overline{D}$, respectively. We then obtain the identity
\eqref{eq:energyformula} by passing to the limit in \eqref{eq:approx}.
\end{proof}

\begin{remark} Equation \eqref{eq:energyformula} is a generalization
  of the corresponding equation for harmonic functions in $\R^n$
\[
\int_{B_r}|\nabla u|^2\, dx = \int_{\partial B_r}
uu_\nu\, dS.
\]
From this identity one deduces that the denominator of $F_2(r)$ in
\eqref{AlmFF} is non-decreasing, cf. the analogue of formula
\eqref{I'} in the case $p=2$ \cite[Equation 1.12]{GarLinIndiana}. {\it For general
  $1<p<\infty$ we do not know whether $I(r)$ is monotone.}

On a related note, one can even provide a characterization for
harmonic functions. If $u$ is harmonic, the preceding identity
is clearly valid. On the other hand, if the above identity holds the
Gauss--Green formula gives
\[
\int_{B_r}u\Delta u\, dx = 0
\] 
for every ball $B_r$. Hence, $u$ is harmonic in $\{x\in G:\ u(x)\neq
0\}$. The well known Rad\'o type theorem by Kr\'al~\cite{Kral} implies
that $u$ is harmonic in $G$.
\end{remark}

We can readily deduce the following from
Lemma~\ref{lemma:energyformula}.

\begin{lemma}
  Suppose $u$ is a solution to the \p-Laplace equation. Then the
  following inequality is valid
  \begin{equation} \label{eq:gradEst} p\int_{B_r}|\nabla u|^p\, dx
    \leq (p-1)\int_{\partial B_r}|\nabla u|^p\, dS + \int_{\partial
      B_r}|u|^p\, dS.
\end{equation}
\end{lemma}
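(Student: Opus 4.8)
The plan is to start directly from the energy identity established in Lemma~\ref{lemma:energyformula}, namely $\int_{B_r}|\nabla u|^p\,dx = \int_{\partial B_r}|\nabla u|^{p-2}uu_\nu\,dS$, and to estimate the boundary integral on the right-hand side pointwise. Since $u_\nu = \nabla u\cdot\nu$ with $|\nu|=1$, the Cauchy--Schwarz inequality gives $|u_\nu|\le |\nabla u|$, whence the integrand is controlled at every point of $\partial B_r$ by $|\nabla u|^{p-2}|u|\,|u_\nu|\le |\nabla u|^{p-1}|u|$.

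The second step is to decouple the factor $|\nabla u|^{p-1}|u|$ by Young's inequality with the conjugate exponents $p/(p-1)$ and $p$. Taking $a=|\nabla u|^{p-1}$ and $b=|u|$, this particular choice of exponents is the one that makes $a^{p/(p-1)}=|\nabla u|^p$, so that $|\nabla u|^{p-1}|u|\le \tfrac{p-1}{p}|\nabla u|^p + \tfrac1p|u|^p$. Integrating this pointwise bound over $\partial B_r$ and combining with the energy identity yields $\int_{B_r}|\nabla u|^p\,dx\le \tfrac{p-1}{p}\int_{\partial B_r}|\nabla u|^p\,dS + \tfrac1p\int_{\partial B_r}|u|^p\,dS$; multiplying through by $p$ produces exactly \eqref{eq:gradEst}.

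I do not expect any genuine obstacle here: both the Cauchy--Schwarz bound on $u_\nu$ and Young's inequality are elementary, and the hypothesis $u\in C^1(G)$ together with $\overline{B}_r\subset G$ makes every boundary integral finite and the appeal to Lemma~\ref{lemma:energyformula} legitimate. The only point requiring a moment's care is the bookkeeping of the conjugate exponents, which must be chosen as $p/(p-1)$ and $p$ (rather than some $\varepsilon$-weighted split) precisely in order to reproduce the sharp constants $p-1$ and $1$ appearing in the statement.
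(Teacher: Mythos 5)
Your proof is correct and coincides with the paper's own argument: both start from the energy identity of Lemma~\ref{lemma:energyformula}, bound the integrand by $|\nabla u|^{p-1}|u|$ using $|u_\nu|\le|\nabla u|$, and then apply Young's inequality with conjugate exponents $p/(p-1)$ and $p$ to obtain the constants $\tfrac{p-1}{p}$ and $\tfrac1p$ before multiplying through by $p$. No gaps; the reasoning matches the paper essentially line for line.
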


\begin{proof}
  From \eqref{eq:energyformula} using Young's inequality we simply
  obtain
\begin{align*}
  \int_{B_r}|\nabla u|^p\, dx & \leq \int_{\partial B_r}|\nabla u|^{p-1}|u|\, dS \\
  & \leq \frac{p-1}{p}\int_{\partial B_r}|\nabla u|^p\, dS +
  \frac1{p}\int_{\partial B_r}|u|^p\, dS.
\end{align*}
\end{proof}

\begin{remark}
  Suppose $u$ is a solution to the \p-Laplace equation and let
  $Z=\{x\in B_r: u(x)=0\}$. If there exists a constant $0<\gamma<1$
  such that $|Z| \geq \gamma |B_r|$, then there exists a constant $C$,
  depending on $n$, $p$, and $\gamma$, such that,
  cf. Giusti~\cite[Theorem 3.17]{Giusti},
\begin{equation*} \label{eq:Poincare}
\int_{B_r}|u|^p\, dx \leq Cr^p\int_{B_r}|\nabla u|^p\, dx.
\end{equation*}
Plugging this into \eqref{eq:gradEst} we have
  \begin{equation*} \label{eq:PoincareEst} \int_{B_r}|u|^p\, dx \leq
    Cr^p\left(\int_{\partial B_r}|\nabla u|^p\, dS + \int_{\partial
        B_r}|u|^p\, dS\right),
\end{equation*}
where $C$ depends on $n$, $p$, and $\gamma$.
\end{remark}

\section{Proof of Theorem~\ref{thm:weakDprop}}
\label{sect:ProofweakDprop}

Let $u$ be an arbitrary function in $C^1(G)$ and consider two balls
$B_r\subset\overline{B}_s\subset G$ such that $0<I(r)\leq I(s)$ for
$r\leq s$, where $s$ is fixed, and $r,s\in (r_b,R_b]$. We stress that
we do not assume any monotonicity of the function $I$.

Integrate both sides of inequality \eqref{I'2} over $(r,s)$ to get the
following estimate
\begin{align} \label{eq:Iest} I(s)-I(r) & \leq
  (n-1)\int_r^s\frac{I(t)}{t}\, dt + p\int_r^s\biggl(\int_{\partial
    B_t}|u|^{p-1}|u_\nu|\, dS\biggr)\, dt \nonumber \\
  & \leq (n-1)I(s)\log\frac{s}{r} +
  \int_r^s\biggl((\eps_0 p)^{-1/(p-1)}\frac{p-1}{t^{1/(p-1)}}\int_{\partial B_t}|u|^p\, dS \nonumber \\
  & \qquad \qquad + \eps_0pt\int_{\partial B_t}|\nabla u|^p\,
  dS\biggr)\, dt \nonumber \\
  & \leq (n-1)I(s)\log\frac{s}{r} + (\eps_0
  p)^{-1/(p-1)}(p-1)I(s)\int_r^st^{-1/(p-1)}\, dt \nonumber \\ &
  \qquad \qquad + \eps_0ps\int_{B_s}|\nabla u|^p\, dx.
\end{align}
We applied above Young's inequality $$ab\leq \eps_0 a^p + (\eps_0
p)^{-q/p}q^{-1}b^{q},$$ $\eps_0>0$, in the case in which
$a=|u_\nu|t^{1/p}$ and $b=|u|^{p-1}t^{-1/p}$. We shall fix $\eps_0$
later. We divide inequality \eqref{eq:Iest} by $I(s)$ and obtain
\begin{align} \label{FFestimate}
  \frac{I(s)-I(r)}{I(s)} & \leq (n-1)\log\frac{s}{r} + (\eps_0
  p)^{-1/(p-1)}(p-1)\int_r^st^{-1/(p-1)}\, dt \nonumber \\
& \qquad \quad + \eps_0 pF_p(s)
\end{align}
for every $r\leq s$, where $s$ is fixed, and $r,s \in (r_b,R_b]$ such
that $I(r)\leq I(s)$. Since the frequency function $F_p(r)$ is locally
bounded by the hypothesis of the theorem, we denote
$M=\|F_p\|_{L^\infty((r_b,R_b])} <\infty$. In addition, we note that
functions $\log\frac{s}{r}$ and $\int_r^st^{-1/(p-1)}\, dt$ in
\eqref{FFestimate} tend to zero when $r$ goes to $s$. In order to get
each term on the right-hand side in \eqref{FFestimate} smaller than,
say, $1/4$ we first set $\eps_0 = 1/(4pM)$ and then choose a radius
$r_0\in (r_b,R_b]$ so close to $r_b$ such that
\[
(n-1)\log\frac{s}{r} \leq \frac1{4} \quad \textrm{and} \quad  (\eps_0
  p)^{-1/(p-1)}(p-1)\int_{r}^s t^{-1/(p-1)}\, dt \leq \frac1{4}
\]
for each $r\leq s$, where $s$ is fixed, and $r,s\in (r_b, r_0]$. Since
$r\mapsto I(r)$ is continuous on $[r_b,r_0]$, there exists a radius
$r^\star \in (r_b, r_0]$ such that
\[
I(r^\star) = \max_{r\in [r_b, r_0]}I(r).
\]
Then we clearly have $0< I(r) \leq I(r^\star)$ for each $r\in
(r_b,r_0]$. Therefore, by the above reasoning, we obtain for $r\in
(r_b,r^\star]\subset (r_b,r_0]$ the following inequality
\[
\frac{I(r^\star)-I(r)}{I(r^\star)} \leq \frac{3}{4},
\]
and hence a weak doubling property for all radii $r\in (r_b,r^\star]$
\[
I(r^\star) \leq 4I(r).
\]
We stress here that although the constant in the preceding weak
doubling property is uniform, the radius $r^\star$ depends on the
function $u$. Inequality \eqref{eq:weakD1} follows from the fact that
$r^\star$ provides the maximum value of $I(r)$ on $[r_b,r_0]$.

\section{Proof of Theorem~\ref{thm:uniquecont}}
\label{sect:ProofUniqCont}

Suppose on the contrary that the function $u$, a non-trivial solution
to the \p-Laplace equation \eqref{eq:pLap}, vanishes in a ball
$\overline{B}_{r_1}$ but $u$ is not identically zero in a concentric
open ball $B_{r_2}$, where $\overline{B}_{r_2}\subset G$. We remark
that the frequency function, $F_p(r)$, is not defined on $[0,r_1]$.

Let $t>0$ and consider an open ball $B_t$ which is concentric with
$B_{r_1}$ and $B_{r_2}$. Define
\[
s=\sup\{t>0: u|_{\partial B_t} \equiv 0\}.
\]
The aforementioned assumptions imply that $s\in [r_1,r_2)$. We note,
in addition, that due to Lemma~\ref{lemma:energyformula} we may
conclude that $u|_{\partial B_\rho}$ does not vanish identically for any
radii $\rho\in (s,r_2]$, hence $I(\rho)\neq 0$; here we could also
have applied the weak maximum and minimum principle instead of
Lemma~\ref{lemma:energyformula}.

The frequency function $F_p(r)$ is defined on $(s,r_2]$, moreover
$r\mapsto F_p(r)$ is absolutely continuous on this half open interval,
and by the hypothesis of the theorem $F_p(r)$ is bounded on
$(s,r_2]$. Theorem~\ref{thm:weakDprop} implies the existence of a
radius $r^\star\in (s,r_2]$ such that the following weak doubling
property holds
\[
I(r^\star) \leq 4I(r),
\]
for every $r\in(s, r^\star]$. Since $I(r)\to 0$ as $r\to s$ we have
reached a contradiction.

\section{Solutions coinciding with affine functions}
\label{sect:Affine}

Let us first state the following coincidence property of the solutions
to the \p-Laplace equation \footnote{We would like to thank an
  anonymous reader for a feedback which led to
  the current formulation of Proposition~\ref{prop:uniqueLinear}.}, which must be well known to the experts. Clearly an affine
function satisfies the \p-Laplace equation.

\begin{proposition} \label{prop:uniqueLinear} Suppose $u$ is a
  solution to the \p-Laplace equation in $G$. Consider an affine
  function $L(x) = l(x) + l_0$, where $l_0\in\R$ and $$l(x) =
  \sum_{i=1}^n\alpha_ix_i \neq 0.$$ If $u(x)=L(x)$ in an open subset
  $D\subset G$, then $u(x)=L(x)$ for every $x\in G$.  
  %
\end{proposition}

Proposition~\ref{prop:uniqueLinear} could also be stated as follows:
Suppose $u$ and $v$ are two solutions to the \p-Laplace equation in
$G$. Assume further that $\nabla v\neq 0$ in $G$. If $u(x)=v(x)$ in an
open set $D\subset G$, then $u(x)=v(x)$ for every $x\in G$.

\begin{proof}[Proof of Proposition~\ref{prop:uniqueLinear}]
We assume that there is an open set in $G$ where $u$ coincides with
$L$; Obviously, we can then consider the maximal open set $D$ where
$u$ coincides with $L$. Hence, we may
conclude that $\nabla u = \nabla L \not = 0$ on
$\overline{D}$. Therefore, by continuity of $\nabla u$ there exists
a neighborhood $B_\delta$, $\delta >0$, of a point $x_0\in \partial D$
such that $|\nabla u |\geq C>0$ on $B_\delta(x_0)$. It is known, and
we refer to Lewis~\cite[p. 208]{Lewis}, that $u$ is real analytic in
the open set where $\nabla u \not =0$. Thus $u(x) = L(x)$ on
$B_\delta$. The coincidence set can be expanded so that $D=G$.
%
\end{proof}

\subsection*{Discussion under extra regularity assumption and without
  using real analyticity}

We recall first that a function $f\in L\loc^q(G)$ vanishes of infinite
order at $x_0\in G$ if for some $q>0$
\[
\limsup_{r\to 0}r^{-k}\int_{B(x_0,r)}|f|^q\, dx = 0
\]
is valid for each $k\in\R$, $k>0$.

\begin{proposition} \label{prop:inforder} Suppose $u\in C^2(G)$ is a
  solution to the \p-Laplace equation in $G$ and that $L$ is an affine
  function, not identically zero. If $u-L$ vanishes of infinite order
  at $x_0\in G$, then $u$ coincides with $L$ in $G$.
\end{proposition}

\begin{remark}
  We stress that the result in Proposition~\ref{prop:inforder} is true
  without assuming that $u\in C^2(G)$. The proof is analogous to that
  of Proposition~\ref{prop:uniqueLinear} and uses real
  analyticity. However, in the $C^2$-case we do not appeal to real
  analyticity; instead we show that $u-L$ satisfies certain \emph{linear
  Laplace-type equation with a drift term} and then we are able to apply the
  frequency function approach by Garofalo and Lin. In addition, the
  obtained linear equation is of independent interest and useful for
  further studies.
\end{remark}

\begin{proof}[Proof of Proposition~\ref{prop:inforder}]
Note that the \p-Laplace equation, \eqref{eq:pLap}, can be written in
a different form as follows
\[
\Delta_p u = |\nabla u|^{p-4}\biggl(|\nabla u|^2\Delta u +
(p-2)\sum_{i,\,j =1}^nu_{x_i}u_{x_j}u_{x_i x_j}\biggr) = 0,
\]
and we may study the equation
\begin{equation} \label{eq:pLapmod}
|\nabla u|^2\Delta u + (p-2)\sum_{i,\,j =1}^nu_{x_i}u_{x_j}u_{x_i x_j}=0.
\end{equation}
Equation \eqref{eq:pLapmod} characterizes the weak solutions $u \in
C^2(G)$ of the \p-Laplace equation. We invoke Juutinen et
al.~\cite{JLM} and Lindqvist~\cite{Lindqvist} for this nontrivial
fact. Consider affine function $L(x)=l(x) + l_0$, $l_0\in\R$,
$l(x)\neq 0$. We shall show that the difference $u-L$, where $u$ is a
solution to \eqref{eq:pLapmod}, satisfies a modified uniformly
elliptic equation of the form
\begin{equation*} \label{eq:elliptic}
\sum_{i,\,j =1}^na_{ij}v_{x_i x_j}+\sum_{i=1}^nb_{i}(x)v_{x_i}=0
\end{equation*}
with constant coefficients $(a_{ij})$ and the drift term $b_i(x)$ is
continuous in $G$. Clearly, $\Delta_p L = 0$. Let
$\alpha:=(\alpha_1,\ldots,\alpha_n) = \nabla L$, and we denote the
difference $u-L$ by $h$. We proceed by manipulating \eqref{eq:pLapmod}
as follows
\begin{align*}
  0 & = |\nabla u|^2\Delta u + (p-2)\sum_{i,\,j =1}^nu_{x_i}u_{x_j}u_{x_i x_j} \\
  & = |(\nabla u-\alpha)+\alpha|^2\Delta u \\
  & \qquad + (p-2)\sum_{i,\,j
    =1}^n\left((u_{x_i}-\alpha_i)+\alpha_i\right)\left((u_{x_j}-\alpha_j)+\alpha_j\right)u_{x_i x_j} \\
  & = \left(|\nabla u-\alpha|^2+2(\nabla u-\alpha)\cdot\alpha+|\alpha|^2\right)\Delta u \\
  & \qquad \quad + (p-2)\left(\sum_{i,\,j
      =1}^n(u_{x_i}-\alpha_i)(u_{x_j}-\alpha_j)u_{x_i x_j} \right. \\
  & \qquad \qquad \left. +\sum_{i,\,j
      =1}^n\alpha_j(u_{x_i}-\alpha_i)u_{x_i x_j} +\sum_{i,\,j
      =1}^n\alpha_i(u_{x_j}-\alpha_j)u_{x_i x_j} \right. \\
  & \qquad \qquad \quad \left. +\sum_{i,\,j
      =1}^n\alpha_i\alpha_ju_{x_i x_j}\right).
\end{align*}
After rearranging the terms we obtain
\begin{align*}
  |\alpha|^2\Delta u & + (p-2)\sum_{i,\,j =1}^n\alpha_i\alpha_ju_{x_i
    x_j} + |\nabla u-\alpha|^2\Delta u + 2(\nabla u-\alpha)\cdot\alpha\Delta u \\
  & \qquad + (p-2)\left(\sum_{i,\,j
      =1}^n(u_{x_i}-\alpha_i)(u_{x_j}-\alpha_j)u_{x_i x_j} \right. \\
  & \qquad \quad + \left.\sum_{i,\,j
      =1}^n\alpha_j(u_{x_i}-\alpha_i)u_{x_i x_j} +\sum_{i,\,j
      =1}^n\alpha_i(u_{x_j}-\alpha_j)u_{x_i x_j}\right) = 0.
\end{align*}
Clearly $\Delta u = \Delta h$ and $\nabla h = \nabla u-\alpha$, thus
we get the following equation
\begin{align*}
  & |\alpha|^2\Delta h + (p-2)\sum_{i,\,j =1}^n\alpha_i\alpha_jh_{x_i
    x_j} + |\nabla h|^2\Delta h + 2(\nabla h\cdot\alpha)\Delta h \\
  & + (p-2)\left(\sum_{i,\,j
    =1}^nh_{x_i}h_{x_j}h_{x_i x_j} + \sum_{i,\,j =1}^n\alpha_jh_{x_i}h_{x_i
    x_j}+\sum_{i,\,j =1}^n\alpha_ih_{x_j}h_{x_i x_j}\right) = 0.
\end{align*}
By inspecting this last equation we observe that it can be
written in the following form
\begin{equation} \label{eq:almostlinear}
  |\alpha|^2\Delta h + (p-2)\sum_{i,\,j =1}^n\alpha_i\alpha_jh_{x_i
    x_j} + \sum_{i=1}^nb_i(x)h_{x_i}=0,
\end{equation}
where we have written
\[
b_i(x) = \Delta h\left(h_{x_i}+2\alpha_i\right) + (p-2)\sum_{j=1}^nh_{x_ix_j}\left(h_{x_j}+2\alpha_j\right).
\]
We study the quadratic form in \eqref{eq:almostlinear}. By
the Schwarz inequality,
\[
\sum_{i,\,j =1}^n\alpha_i\alpha_j\xi_i\xi_j =
\left(\sum_{i=1}^n\alpha_i\xi_i\right)^2 \leq |\alpha|^2|\xi|^2.
\]
Thus, for $p\geq 2$ we obtain 
\[
(p-1)|\alpha|^2|\xi|^2 \geq |\alpha|^2|\xi|^2 + (p-2)\sum_{i,\,j
  =1}^n\alpha_i\alpha_j\xi_i\xi_j \geq |\alpha|^2|\xi|^2 > 0,
\]
whereas if $1<p<2$ we deduce 
\[
|\alpha|^2|\xi|^2 \geq |\alpha|^2|\xi|^2 + (p-2)\sum_{i,\,j
  =1}^n\alpha_i\alpha_j\xi_i\xi_j \geq (p-1)|\alpha|^2|\xi|^2>0.
\]
Hence, the quadratic form is positive definite, and equation
\eqref{eq:almostlinear} is uniformly elliptic for all
$1<p<\infty$. Moreover, since the principal part coefficients are
constants \eqref{eq:almostlinear} can be written in the divergence
form.

Due to results by Garofalo and Lin in \cite{GarLinCPAM} and
\cite{GarLinIndiana}, the strong unique continuation principle is
valid for the equation
\begin{equation} \label{eq:GarLin}
-\nabla\cdot(A(x)\nabla u) + b(x)\cdot\nabla u + V(x)u=0,
\end{equation}
where $A(x) = (a_{ij}(x))_{i,j=1}^n$ is a real symmetric matrix-valued
function satisfying the uniform ellipticity condition and it is
Lipschitz continuous. The lower order terms, the drift coefficient
$b(x)$ and the potential $V(x)$, are even allowed to have
singularities.  The reader should consult (1.4)--(1.6) in
\cite{GarLinCPAM} for the exact structure conditions of $b$ and
$V$.

To be more precise, one of the main results in \cite{GarLinCPAM} is
that if $v$ is a solution to \eqref{eq:GarLin} in $G$, then $v$
satisfies the following doubling property
\begin{equation} \label{eq:doublingprop}
\int_{B_{2r}}v^2\, dx \leq C\int_{B_r}v^2\, dx,
\end{equation}
where $\overline{B}_{2r}\subset B_{\bar r}\subset G$, and the constant
$C$ depends on $n$, $v$, the ellipticity and the Lipschitz constant of
$A(x)$, and the local properties of $b(x)$ and $V$, and $\bar r$
depends on the aforementioned parameters but not on the function
$v$. See \cite{GarLinCPAM} for more details. Then if $v$ vanishes of
infinite order at $x_0\in G$,
$v$ must vanish identically in $G$. This is a consequence of
\eqref{eq:doublingprop}, we consult the proof of Theorem 1.2 in
\cite{GarLinIndiana} for this fact. Also the following follows by such
reasoning: if $v$ vanishes identically on a subdomain of $G$, then it
vanishes on the whole $G$, see Tao--Zhang~\cite[Corollary
2.6]{TaoZhang}.

To conclude, since our equation \eqref{eq:almostlinear} is of the type
\eqref{eq:GarLin} with $V\equiv 0$ and the drift term, $b(x)$, is
continuous, we obtain the claim from the results in \cite{GarLinCPAM}
as explained above.
\end{proof}

\begin{remark} \label{rmk:General} An argument many ways analogous to
  the preceding proof justifies the following more general claim:
  Suppose $u,\,v\in C^2(G)$ are two solutions to the \p-Laplace
  equation in $G$. Assume further that $\nabla v\neq 0$ in $G$. Then
  if $u-v$ vanishes of infinite order at $x_0\in G$, it must be that
  $u$ coincides with $v$ in the whole $G$. 

  This observation is obtained by considering \eqref{eq:pLapmod} which
  is satisfied by both $u$ and $v$. By subtracting and denoting
  $h=u-v$ we end up having the following equation in nondivergence
  form
\begin{align} \label{eq:Heq}
  |\nabla v|^2\Delta h & + (p-2)\sum_{i,j=1}^n v_{x_i}v_{x_j}h_{x_ix_j}
  +
  \left((\nabla v+\nabla u)\cdot\nabla h\right)\Delta u \nonumber \\
  & + (p-2)\sum_{i,j=1}^n u_{x_ix_j}\biggl(v_{x_i}h_{x_j} +
  u_{x_j}h_{x_i}\biggr)=0.
\end{align}
Since $u$ and $v$ are in $C^2(G)$ it is well known that equation
\eqref{eq:Heq} can be rewritten in the divergence form, see, e.g.,
\cite[\textsection 6]{Evans}. In addition, equation \eqref{eq:Heq} in
the divergence form is uniformly elliptic for all $1<p<\infty$ since
$\nabla v\neq 0$ in $G$. A reasoning similar to the one in the
preceding proof gives the claim.
\end{remark}

\section{Further remarks}
\label{sect:Remarks}

We close the paper by giving a few remarks which might be of interest
for further studies.

Suppose $u$ is a non-trivial solution to the \p-Laplace
equation. Assume further that there exists a positive constant
$A<\infty$ such that for any $\overline{B}_r\subset G$
\begin{equation} \label{eq:D'<I}
\int_{\partial B_r}|\nabla u|^p\, dS \leq A\int_{\partial B_r}|u|^p\, dS.
\end{equation}
Combining \eqref{eq:D'<I} with \eqref{eq:gradEst} we obtain
\[
\int_{B_r}|\nabla u|^p\, dx \leq C\int_{\partial B_r}|u|^p\, dS,
\]
for some $C$ depending only on $p$ and $A$, and hence that
$\|F_p\|_{L^\infty((r_b,R_b])}<\infty$. Theorem~\ref{thm:uniquecont}
implies that $u$ satisfies the unique continuation principle.

Theorem~\ref{thm:weakDprop} tells that the boundedness of the
frequency function implies \eqref{eq:weakD1} and, more importantly,
the weak doubling property \eqref{eq:weakD2}. In the following, we
shall show that also the converse is true in a situation in which a
certain additional assumption, which is valid in the case $p=2$, is
satisfied. Suppose inequality \eqref{eq:weakD1} holds for every
$r_1,\,r_2\in (r_b,r^\star]$. Assume further that there exists a
positive constant $A<\infty$ such that
\begin{equation} \label{eq:condII}
\int_{B_r}|u|^p\, dx \leq Ar\int_{\partial B_r}|u|^p\, dS,
\end{equation}
where $B_r \subset G$. Let $u$ be a solution to the \p-Laplace
equation. It therefore satisfies a Caccioppoli type estimate (e.g. \cite[Lemma 2.9]{Lindqvist}). More
precisely, there exists a positive constant $C<\infty$, depending on
$p$, such that for all $B_r\subset B_\rho\subset G$ we have
\begin{equation*} \label{eq:Caccioppoli}
\int_{B_r}|\nabla u|^p\, dx \leq \frac{C}{(\rho-r)^p}\int_{B_\rho}|u|^p\, dx.
\end{equation*}
Let $r\in (r_b,r^\star]$. The weak doubling property
\eqref{eq:weakD2}, the Caccioppoli estimate, and \eqref{eq:condII}
altogether imply the following estimate
\begin{equation*}
  F_p(r) = \frac{r\int_{B_r}|\nabla u|^p\, dx}{\int_{\partial
      B_r}|u|^p\, dS}
  \leq \frac{Cr}{(r^\star-r)^p}\frac{\int_{B_{r^\star}}|u|^p\, dx}{\int_{\partial B_{r^\star}}|u|^p\, dS} \leq C\frac{rr^\star}{(r^\star-r)^p},
\end{equation*}
where the constant $C$ depends on $n$, $p$, and $A$. To conclude, the
frequency function remains bounded as $r$ tends to $r_b$.

We close the paper by remarking that convexity of $\int_{B_r}|u|^p\,
dx$ implies \eqref{eq:condII} with $A=1$. For harmonic functions,
moreover, it is easy to prove that both
\[
\int_{B_r}u^2\, dx  \quad \textrm{and}\quad \int_{\partial B_r}u^2\,
dS
\]
are indeed convex in $\R^n$, $n\geq 2$. Convexity of the latter
follows by showing that
\[
\frac{d}{dr}\left(\frac1{r}\int_{\partial B_r}u^2\, dS\right) \geq 0.
\]

\end{document}